\theoremstyle{plain}
\newtheorem{theorem}{Theorem}%%[section]
\newtheorem*{theom}{Main Theorem}
\newtheorem*{theoBM}{Beurling--Malliavin Theorem}
\newtheorem*{corollary}{Corollary}%%[section]
\theoremstyle{definition}
\newtheorem{example}{Example}
\theoremstyle{remark}
\newtheorem{remark}{Remark}
\newcommand{\R}{{\mathbb R}}
\newcommand{\bC}{{\mathbb C}}
\newcommand{\N}{\mathbb N}
\newcommand{\Z}{{\mathbb Z}}
\newcommand{\e}{\varepsilon}
\newcommand{\pvint}{{\not}{\mspace{-2mu}\int}}
\newcommand{\mc}{\mathcal}
\renewcommand{\d}{{\,d}}
\renewcommand{\phi}{\varphi}
\newcommand{\Exp}{{\mathrm{Exp}}}
\renewcommand{\Re}{{\mathrm{Re}\,}}
\renewcommand{\Im}{{\mathrm{Im}\,}}
\DeclareMathOperator{\Poi}{P}
\DeclareMathOperator{\exc}{exc}
\DeclareMathOperator{\Hil}{H}
\author{ Bulat N.~Khabibullin\footnote{Supported by Russian Foundation of Basic Reseach, grant No.~09-01-00046-a}}
\title{Distribution of zero subsequences for
Bernstein space  and criteria of
 completeness\\ for exponential system on a segment}
\date{}
\begin{document}
\maketitle

{\abstract{For $\sigma\in (0,+\infty)$, denote by
 $B_\sigma^{\infty}$  the Bernstein space
(of type $\sigma$) of all entire  functions of
exponential type $\leq \sigma$ bounded on real axis $\R$. Let $I_d\subset \R$ be a segment of length $d>0$. We announce
complete description of non-uniqueness sequences of points for $B_\sigma^\infty$ and criteria of completeness of exponential system in $C(I_d)$  or $L^p(I_d)$
accurate within one or two exponential functions.

\noindent{\bf Keywords:}{ entire function, Bernstein space, zero subsequence, uniqueness sequence, completeness, exponential system, Poisson integral, Hilbert transform}}}

\section{Definitions, and statements of problems}
 Denote by $\N$,  $\R$, $\Z$, and  $\bC$
the sets of natural, real, integer, and complex
numbers respectively. Besides, $\R_*:=\R \setminus \{0\}$, 
$\bC_*:=\bC \setminus \{0\}$, $\bC_\pm:=\bC\setminus \R$.

For a segment $I_d\subset \R$ of length $d$, we denote by  $C(I_d)$ and
$L^{p} (I_d)$  the space of continuous functions $f$ on
$I_d$ with $\sup$-norm $\|f\|_\infty:=\sup \bigl\{ |f(x)|\colon x\in I_d\bigr\}$ and the space of functions
 $f$ with finite norm
\begin{equation*}
\|f\|_{p}:=\left(\int_{I_d} | f(x) |^p \d x
\right)^{1/p},
\quad p\geq 1.
\end{equation*}
For $\sigma\in (0,+\infty)$, denote by
 $B_\sigma^{\infty}$  {\it the Bernstein space\/}
(of type $\sigma$) of all entire functions of
exponential type $\leq \sigma$ bounded on $\R$, i.\,e.
\begin{equation}\label{df:Bs}
		\log|f(z)|\leq \sigma|\Im z|+c_f,
	\quad  z\in \bC,
\end{equation}
where $c_f$ is a constant (see \cite{Lev96}).

Let 
\begin{equation}\label{df:L}
\Lambda =\{ \lambda_k \}_{k\in \N}\subset \bC
\end{equation}
be a point sequence on $\bC$ without limit points
in $\bC$.  Each sequence $\Lambda$ defines
a {\it counting measure\/}
\begin{equation}\label{df:nLa}
n_{\Lambda}(S):=\sum_{\lambda_k \in S} 1,
\quad S\subset \bC ,
\end{equation} 
The sequence $\Lambda$ is a {\it zero subsequence\/} (non-uniqueness se\-qu\-en\-ce) for $B_\sigma^\infty$ iff there exists a nonzero function $f_\Lambda\in B_\sigma^\infty$ such that $f_\Lambda$ vanish on $\Lambda$, i.\,e. $f_\Lambda^{(m-1)} (\lambda_k)=0$ for  all $k\in \N$ and $1\leq m \leq n_\Lambda \bigl(\{\lambda \}\bigr)$.  The sequence $\Lambda$
defines also the {\it exponential system}
\begin{equation}\label{df:Exp}
\Exp^\Lambda {:=}\{x\mapsto
x^{m-1}e^{\lambda x} \colon x\in I_d,
\; \lambda \in \Lambda ,
\; 1\leq m\leq n_\Lambda \bigl(\{\lambda \}\bigr),
\; m\in \N  \}.
\end{equation}

The exponential system 
$\Exp^{i\Lambda}$, $i\Lambda:=\{ i\lambda_k\}_{k\in \N}$,
 is {\it complete\/} in $C(I_d)$  or $L^p(I_d)$  iff the closure of its linear span coincides with $C(I_d)$  or $L^p(I_d)$.

We solve the following problems.
\begin{itemize}
\item {\it Complete description of zero subsequences for $B_\sigma^\infty$.}
\item {\it Criteria of completeness of exponential system\/ $\Exp^{i\Lambda}$ in $C(I_d)$  or $L^p(I_d)$
accurate within one or two exponential functions.}
\end{itemize}
It was many times noticed that obtaining of criteria of completeness of exponential systems $\Exp^{i\Lambda}$ in $C(I_d)$  or $L^p(I_d)$ purely in terms $\Lambda$ and $d$ is exclusively difficult problem 
\cite{Kras86}, \cite{KhNP}, \cite[p.~167]{Koosis92}, \cite[3.3]{HMN04}, \cite[0.1]{MP05}.
 
\paragraph{Poisson integral.}
For $z\in \bC_{\pm} $ 
{\it the Poisson integral\/}
 $\Poi_{{\bC_{\pm}}}\, \phi$ of function 
$\phi \in L^1(\R )$ is defined as
\begin{align}\label{int:P}
(\Poi_{ \bC_\pm} \phi )(z):&=
\frac1{\pi}\int_{-\infty}^{+\infty}
\frac{|\Im z|}{(t-\Re z)^2
+(\Im z)^2} \, \phi (t)\d t \notag\\
&=\frac1{\pi}\int_{-\infty}^{+\infty}
\left|\Im\frac{1}{t- z} \right|\, \phi (t)\d t, 	
\end{align}
But for $x \in \R$
we set
\begin{equation}
(\Poi_{{ \bC_\pm}} \phi)(x):= 
\phi (x),
\quad x \in \R_*.
\end{equation}
The Poisson integral from \eqref{int:P} is harmonic function on $\bC_\pm$.

If $\phi \in C(\R_*)$, then the Poisson transform $\Poi_{{ \bC_\pm}} \phi$ is also continuous function on $\bC_*$. More in detail look in \cite{HJ}.

\paragraph{Hilbert transform ({\rm see \cite{HJ}}).}
For a function 
$$
\phi \in L^1(\R_*),  \quad\phi \colon \R_*\to \R,
$$ 
the {\it direct Hilbert transform\/} $\Hil$ is defined as a rule by integral
\begin{equation*}
(\Hil \phi)(x):=\frac{1}{\pi}\;\pvint_{\R_*}\frac{\phi (t)}{x-t} \d t, \quad x\in \R_*,
\end{equation*}
where the strikethrough of integral 
$$\pvint_{\R_*}:=\lim_{0<\e\to 0}
\int_{\R_* \setminus  (x-\e,  x+\e)} \, , \quad x\neq 0,$$ 
means the principal integral value  in the sense of Cauchy.
Such function $\Hil \phi$ is good defined almost everywhere on $\R$.  The {\it inverse  Hilbert transform\/} differs only by sign
\begin{equation*}
\hspace{-2mm}(\Hil^{-1} \phi)(x):=\frac{1}{\pi}\;\pvint_{\R_*}\frac{\phi (t)}{t-x} \d t=-(\Hil \phi)(x), \quad x\in \R_*.
\end{equation*}	

\section{Main Theorem}
\setcounter{equation}{0}
\paragraph{Classes of test functions $R\mc P_0^{m}$,
$2\leq m\leq \infty$.} This class will consist of all
{\it positive continuous functions}
$$
\phi \colon \R_* \to [0,+\infty)	, \quad Z_\phi:=\left\{x\in \R_*\colon \phi (x)=0\right\},
$$
from the class
 $C^m(\R_* \setminus Z_\phi)$ of
$m$ times continuously differentiable functions
on $\R_*\setminus   Z_\phi$ with
\begin{itemize}
\item {\it a finiteness condition}
\begin{equation}\label{co:phin}
\phi (x)\equiv 0, \quad  |x|\geq R_\phi>0,
\end{equation}
where $R_\phi>0$ is a constant;
\item {\it a semi-normalization condition\/}
\begin{equation}\label{co:no}
\limsup_{0\neq x\to 0} \frac{\phi (x)}{\log \bigl(1/{|x|}\bigr)}\leq 1;
\end{equation}

\item
{\it a conjugate  condition of positivity}
\begin{equation}\label{co:dpos}
%%\hspace{-9mm}
(-\Hil \phi)'(x):=\frac{1}{\pi}\;
\pvint_{\R_*}
\frac{\phi (t)-\phi (x)}{(t-x)^2} \d t\geq 0,
\quad x\in \R_*\setminus  Z_\phi,
\end{equation}
i.\,e. condition of decrease for the   Hilbert
transform $\Hil \phi$ separately
on every connected component (open interval) of the  subset
$\R_*\setminus   Z_\phi \subset \R$.
\end{itemize}
The Hilbert transform and the differentiation operator commute, i.\,e.
\begin{equation*}
	\frac{d}{\d x}(-\Hil \phi)\equiv
 -\Hil \frac{d}{\d x}\phi, 
\end{equation*}
 for $\phi \in L^p ( \R_*)$, $p\geq 1$, and for Schwartz distributions $\phi$ (see \cite{Pan96}).

Besides,  the  left-hand member of \eqref{co:dpos} can be rewritten as
\begin{equation*}
(-\Hil \phi )'(x)=\frac{1}{\pi}\;
\pvint_{0}^{+\infty}
\frac{\phi(x+t)+\phi(x-t)-2\phi(x)}{t^2} \d t .
\end{equation*}

\begin{theom}[\rm on zero
 subsequence for Bernstein space]
Let $\Lambda %%=\{\lambda_k\}_{k\in \N}
\not\ni 0$
 be a point sequence on $\bC$ from \eqref{df:L}, and
  $\sigma \in (0,+\infty)$.
The following three assertions are equivalent:
\begin{enumerate}[\rm 1)]
\item $\Lambda$ is
zero subsequence for Bernstein space
 $B_\sigma^{\infty}$;
\item for the some (for any) $m\in \bigl(\N \setminus \{1\}\bigr)\cup \infty$
the condition
\begin{equation}\label{est:t}
\sup_{\phi\in R\mc P_0^{m}}
\left( \sum_{k\in \N} (\mathrm P_{{\bC_\pm}} \phi)(\lambda_k)
-\frac{\sigma}{\pi}\int_{-\infty}^{+\infty}\phi (x) \d x
\right)	<+\infty 
\end{equation}
 is fulfilled.
\item the condition of\/ \eqref{est:t} is fulfilled after
replacement of the class
 $R\mathcal P_0^{m}$ with $R\mathcal P_0^{m}\cap C^\infty (\R_*)$, where $C^\infty(\R_*)$ is the class of infinitely differentiable  functions on $\R_*$.
\end{enumerate}
\end{theom}

\begin{remark} If $\Lambda \subset
\R_*$ is real sequence, then \eqref{est:t}
looks absolutely simply:
\begin{equation}\label{co:Lre}
\sup_{\phi \in \mathcal R\mc P_0^m}\left(
\sum_{\lambda\in \Lambda}	
\phi(\lambda)-\frac{\sigma}{\pi}
\int_{-\infty}^{+\infty}\phi (x) \d x
\right)<+\infty.
\end{equation}
\end{remark}

\begin{remark}\label{r:2}
Any shifts of any finite number of points from
$\Lambda$ do not change his property as
zero subsequence for $B_\sigma^\infty$. Therefore, without loss of generality, we can consider only cases
$0\notin \Lambda$.
\end{remark}

\begin{remark} Conditions \eqref{est:t}and \eqref{co:Lre} are similar to definition of order relation  on sets of real-valued Radon measures or on distributions. Indeed, if $\nu$ and $\mu$ are two such measures or two distributions on $\bC$, then $\nu \leq \mu$ iff 
\begin{equation}\label{df:ord}
	\sup_{\phi \in (C_0^\infty(\bC))^+
	} \bigl(\nu (\phi)-\mu (\phi)\bigr)\leq 0,
\end{equation}
where $(C_0^\infty(\bC))^+$ is the class of infinitely differentiable finite positive functions on $\bC$.

The measure $\mu_\sigma$ with density 
$$
\d \mu_\sigma(x):=\frac{\sigma}{\pi}\d x, \quad x\in \R,
$$ 
on $\R$ is the Riesz measure of subharmonic function 
$$M_\sigma (z):=\sigma |\Im z|, \quad z\in \bC, 
\quad \mu_\sigma:=\frac1{2\pi} \, \Delta M_\sigma \geq 0,
$$ 
where $\Delta$ is the Laplace operator in the sense of the Schwartz distribution theory.
The function $M_\sigma$ is located in the right-hand mem\-ber of the inequality \eqref{df:Bs} and  defines the Bernstein space $B_\sigma^\infty$. In these notations the conditions \eqref{est:t} and \eqref{co:Lre} it is possible to write in the form (see \eqref{df:nLa})
\begin{equation*}%%\label{est:tsim}
\sup_{\phi\in R\mc P_0^{m}}
\left( n_\Lambda(\Poi_{{\bC_\pm}} \phi)
-\mu_\sigma(\phi )
\right)	<+\infty .
\end{equation*}
This record is useful for comparison to \eqref{df:ord}.
\end{remark}

\section{On the completeness\\ of exponential systems}
\setcounter{equation}{0}
A system of vectors of vector topological space is
complete in  this space if the closure of its linear span coincides
with this space. Otherwise the system is
incomplete.

For spaces on a segment or an interval
on $\R$ traditionally consider exponential systems
 with  sequence of exponents
$$
i\Lambda :=\{ i \lambda_k\}_{k\in \N}, 
\quad \Exp^{i\Lambda}=\bigl\{e^{i\lambda x}\colon
\lambda\in \Lambda , \; x\in \R\bigr\}.
$$

Denote by $I_d\subset \R$ an arbitrary segment $[a,b]$
 of length $d=b-a$.

By virtue of known interrelation between
 uniqueness and completeness
the Main Theorem implies

\begin{theorem}[\rm
on completeness of exponential systems]\label{th:ces}
 If a point sequence $\Lambda=\{ \lambda_k\}_{k\in \N}\not\ni 0$ satisfies to the condition 
\begin{equation}\label{est:tcomp}
\sup_{\phi\in R\mc P_0^{m}}
\left( \sum_{k\in \N} (\mathrm P_{{\bC_\pm}} \phi)(\lambda_k)
-\frac{d}{2\pi}\int_{-\infty}^{+\infty}\phi (x) \d x
\right)	=+\infty ,
\end{equation}
then the system
$\Exp^{i\Lambda}$ is complete in
$C(I_{d})$ and~$L^p(I_{d})$.

Inversely, if 
\begin{equation}\label{est:tcomp-}
\sup_{\phi\in R\mc P_0^{m}}
\left( \sum_{k\in \N} (\Poi_{{\bC_\pm}} \phi)(\lambda_k)
-\frac{d}{2\pi}\int_{-\infty}^{+\infty}\phi (x) \d x
\right)	<+\infty ,
\end{equation}
then, for any pair different points
 $\{\lambda', \lambda''\}\subset \Lambda$,
the system
$\Exp^{i\Lambda\setminus\{i\lambda'\}}$
is incomplete in $C(I_{d})$ and
  $L^p(I_{d})$ for $p\geq 2$,
and the system
$\Exp^{i\Lambda\setminus
\{i\lambda',\, i\lambda''\}}$
is incomplete in $L^p(I_{d})$ for
$1\leq p<2$.
\end{theorem}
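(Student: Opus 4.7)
The plan is to reduce Theorem~\ref{th:ces} to the Main Theorem specialized to $\sigma=d/2$, under which the factor $\sigma/\pi$ in \eqref{est:t} becomes $d/(2\pi)$, so \eqref{est:tcomp-} is exactly the zero-subsequence criterion for $B_{d/2}^\infty$, and \eqref{est:tcomp} is its negation. What then remains is the classical duality dictionary between non-uniqueness for $B_{d/2}^\infty$ and completeness of $\Exp^{i\Lambda}$ on a segment of length $d$.

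For the first (completeness) half I would argue by contraposition. If $\Exp^{i\Lambda}$ fails to be complete in $X\in\{C(I_d),L^p(I_d)\}$, Hahn--Banach yields a nonzero annihilating functional $\mu$, which in either case is a finite complex Borel measure on $I_d$ (for $L^p$ via $L^{p'}(I_d)\subset L^1(I_d)$, using finiteness of $I_d$). The Fourier--Laplace transform $\zeta\mapsto \int_{I_d}e^{i\zeta x}\,d\mu(x)$, after the affine shift recentering $I_d$ at the origin, is entire, not identically zero, bounded on $\R$ by $\|\mu\|$, of exponential type $\le d/2$, and vanishes on $\Lambda$. Hence $\Lambda$ would be a zero subsequence for $B_{d/2}^\infty$, contradicting \eqref{est:tcomp} via the Main Theorem.

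For the second (incompleteness) half, \eqref{est:tcomp-} combined with the Main Theorem produces a nonzero $f\in B_{d/2}^\infty$ vanishing on $\Lambda$. For any $\lambda'\in\Lambda$ (nonzero since $0\notin\Lambda$) set $F(z):=f(z)/(z-\lambda')$; this is entire (since $f(\lambda')=0$), still of type $\le d/2$, and $|F(x)|=O(1/|x|)$ at infinity, with boundedness near a real $\lambda'$ ensured by Bernstein's inequality $|f'(\lambda')|\le (d/2)\|f\|_\infty$. Thus $F\in B_{d/2}^\infty\cap L^r(\R)$ for every $r>1$; taking $r=2$, the $L^2$ Paley--Wiener theorem (applied on the recentered interval) identifies $F$ with the Fourier transform of a nonzero $g\in L^2(I_d)$ annihilating $\Exp^{i\Lambda\setminus\{i\lambda'\}}$. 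Since $L^2(I_d)$ embeds into $M(I_d)$ and into $L^{p'}(I_d)$ for every $p\ge 2$, this already yields incompleteness in $C(I_d)$ and in $L^p(I_d)$ with $p\ge 2$. For $1\le p<2$ the dual exponent $p'>2$ demands an $L^{p'}$--annihilator (in fact an $L^\infty$ one suffices), so I would replace $F$ by the double quotient $f(z)/[(z-\lambda')(z-\lambda'')]$, which now lies in $L^1(\R)\cap B_{d/2}^\infty$; the $L^1$ Paley--Wiener theorem then delivers an annihilator $g\in L^\infty(I_d)\subset L^{p'}(I_d)$.

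The main technical obstacles I expect are, first, checking that $f/(z-\lambda')$ is genuinely bounded on $\R$ when $\lambda'$ is real (handled by Bernstein's inequality together with the $O(1/|x|)$ decay at infinity, plus an analogous Taylor estimate if $\lambda'$ is a zero of higher order coming from multiplicities in $\Lambda$), and second, being scrupulous with sign conventions and with the affine shift converting the Fourier transform of a measure on $[a,b]$ into a function of type $\le d/2$ centered at the origin. Everything else is routine bookkeeping in the Paley--Wiener correspondence across the various spaces.
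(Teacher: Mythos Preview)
Your proposal is correct and is precisely the ``known interrelation between uniqueness and completeness'' that the paper invokes in lieu of a proof; the paper does not spell out any argument for Theorem~\ref{th:ces} beyond that one sentence, so what you wrote is exactly the intended derivation, with the Main Theorem at $\sigma=d/2$ supplying the equivalence between \eqref{est:tcomp-} and $\Lambda$ being a zero subsequence for $B_{d/2}^\infty$, and Hahn--Banach/Paley--Wiener furnishing the duality (including the bookkeeping that explains why one point must be dropped to land in $L^2(\R)$ for $C(I_d)$ and $L^p$, $p\ge 2$, and two points to land in $L^1(\R)$ for $L^p$, $1\le p<2$).
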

\begin{remark}
From Theorem \ref{th:ces} we can obtain  a whole number of basic old
results on completeness of exponential systems
in spaces $C(I_{d})$ and
 in $L^p(I_{d})$ (for example, the
Berling--Malliavin Theorem on radius of
 completeness),  and also new results.
\end{remark}

\begin{example} Let's prove very briefly and
 quickly one old result.

\begin{theorem}[{\rm \cite{Sch43}, \cite[Theorem 41]{Red77}}]\label{th:2} 
If a point sequence
$\Lambda=\{\lambda_k\}_{k\in \N}\subset \bC$ satisfies to two conditions
\begin{equation}\label{cond:SR}
0<\alpha \leq |\arg \lambda_k|\leq \pi-\alpha, 
\; k\in \N , \qquad
\sum_{k\in \N} \left|\Im \frac{1}{\lambda_k}
\right|<+\infty \, , 
\end{equation}
then the system $\Exp^{i\Lambda}$ is incomplete
in   any $C(I_{d})$ and
  $L^p(I_{d})$.
\end{theorem}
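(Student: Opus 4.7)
Plan: The plan is to apply Theorem~\ref{th:ces}. Because its conclusion yields incompleteness of a subsystem obtained from $\Exp^{i\Lambda}$ by removing one or two exponentials, I would first enlarge the sequence: fix $\mu_1,\mu_2\in\bC\setminus(\Lambda\cup\{0\})$ satisfying the angular constraint of \eqref{cond:SR}, and set $\tilde\Lambda:=\Lambda\cup\{\mu_1,\mu_2\}$. Then $\tilde\Lambda$ still obeys \eqref{cond:SR}, and once \eqref{est:tcomp-} is verified for $\tilde\Lambda$, Theorem~\ref{th:ces} applied with $\lambda'=\mu_1$ (resp.\ $\{\lambda',\lambda''\}=\{\mu_1,\mu_2\}$) will yield incompleteness of $\Exp^{i\tilde\Lambda\setminus\{i\mu_1\}}\supseteq\Exp^{i\Lambda}$ in $C(I_d)$ and $L^p(I_d)$ for $p\geq 2$ (resp.\ of $\Exp^{i\tilde\Lambda\setminus\{i\mu_1,i\mu_2\}}=\Exp^{i\Lambda}$ in $L^p(I_d)$ for $1\leq p<2$), as asserted. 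Since each $(\Poi_{\bC_\pm}\phi)(\mu_j)$ is bounded by a constant multiple of $\int_\R\phi\,dt$, verifying \eqref{est:tcomp-} for $\tilde\Lambda$ reduces to the same condition for $\Lambda$.

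To estimate $\sum_k (\Poi_{\bC_\pm}\phi)(\lambda_k)$, I would write $(\Poi_{\bC_\pm}\phi)(\lambda_k)=\pi^{-1}\bigl|\Im\int_\R \phi(t)/(t-\lambda_k)\,dt\bigr|$ and peel off the leading term via the algebraic identity $1/(t-\lambda_k)=-1/\lambda_k+t/[\lambda_k(t-\lambda_k)]$, obtaining
\begin{equation*}
(\Poi_{\bC_\pm}\phi)(\lambda_k) \;\leq\; \frac{1}{\pi}\Bigl|\Im\frac{1}{\lambda_k}\Bigr|\int_\R \phi\,dt + \text{(remainder)},
\end{equation*}
with a remainder whose sum over $k$ is controlled by $\sum_k 1/|\lambda_k|^2$ and the first moment of $\phi$. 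The angular condition of \eqref{cond:SR} gives $|\Im\lambda_k|\geq\sin\alpha\cdot|\lambda_k|$, hence $|\Im(1/\lambda_k)|\geq\sin\alpha/|\lambda_k|$, so that $\sum_k|\Im(1/\lambda_k)|<\infty$ upgrades to $\sum_k 1/|\lambda_k|<\infty$ and $\sum_k 1/|\lambda_k|^2<\infty$, and in particular $|\lambda_k|\to\infty$. The finitely many $\lambda_k$ near the origin are handled by the trivial estimate $(\Poi_{\bC_\pm}\phi)(\lambda_k)\leq \int_\R\phi/(\pi|\Im\lambda_k|)$ together with the positive lower bound on $|\Im\lambda_k|$. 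Combining these yields a bound of the form $\sum_k(\Poi_{\bC_\pm}\phi)(\lambda_k)\leq C(\Lambda)\int_\R\phi + \text{error}$.

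The main obstacle is matching this estimate against $(d/2\pi)\int_\R\phi$ for \emph{every} $d>0$, not merely for $d$ above some threshold determined by $\Lambda$. The crude linear-in-$\int_\R\phi$ Poisson bound is too generous for small $d$, so tightening it requires essential use of the defining properties of $R\mc P_0^m$. In particular, the Hilbert-transform monotonicity \eqref{co:dpos} is strongly restrictive --- a direct computation shows that even the natural logarithmic profile $\log(R/|x|)\chi_{[-R,R]}(x)$ fails \eqref{co:dpos} for $R$ moderately large --- so the class is in fact much narrower than it first appears. Extracting from \eqref{co:no} and \eqref{co:dpos} sharp control of $\int_\R\phi$, or equivalently a refined Poisson-side estimate that absorbs any positive coefficient in front of $\int_\R\phi$, is where I expect the real analytic work of the proof to concentrate.
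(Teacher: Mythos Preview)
Your device of enlarging $\Lambda$ by $\mu_1,\mu_2$ is a legitimate way to swallow the ``minus one or two exponentials'' in Theorem~\ref{th:ces}; the paper instead uses a slack $d'<d$ for the same purpose. But after that your plan diverges from what is actually needed, and the ``main obstacle'' you isolate is an artefact of an unnecessarily complicated estimate.

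The Poisson bound requires no algebraic splitting of $1/(t-\lambda_k)$. The angular hypothesis gives $|\Im\lambda_k|\geq |\lambda_k|\sin\alpha$, so for every real $t$
\[
(t-\Re\lambda_k)^2+(\Im\lambda_k)^2\;\geq\;(\Im\lambda_k)^2\;\geq\;|\lambda_k|^2\sin^2\alpha,
\]
and the Poisson kernel at $\lambda_k$ is bounded, \emph{uniformly in $t$}, by $\dfrac{1}{\pi\sin^2\alpha}\bigl|\Im(1/\lambda_k)\bigr|$. Integrating against $\phi\geq 0$ and summing,
\[
\sum_{k}(\Poi_{\bC_\pm}\phi)(\lambda_k)\;\leq\;\frac{1}{\pi\sin^2\alpha}\Bigl(\sum_k\Bigl|\Im\frac{1}{\lambda_k}\Bigr|\Bigr)\int_{-\infty}^{+\infty}\phi(x)\d x,
\]
with no remainder and no first moment. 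Your decomposition, by contrast, leaves an error governed by $\int_\R |t|\,\phi(t)\d t$, which is \emph{not} uniformly dominated by $\int_\R\phi$ over $R\mc P_0^m$ since the support radius $R_\phi$ in \eqref{co:phin} is unrestricted; so your route does not close as written.

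The coefficient in front of $\int_\R\phi$ is now $\dfrac{1}{\pi\sin^2\alpha}\sum_k\bigl|\Im(1/\lambda_k)\bigr|$, and the point you are missing is that it can be made smaller than any prescribed $\e>0$: by Remark~2, shifting finitely many points of $\Lambda$ leaves (in)completeness unaffected, and such a shift forces $\sum_k|\Im(1/\lambda_k)|<\e$. Choosing $\e$ with $\e/(\pi\sin^2\alpha)<d/(2\pi)$ then yields \eqref{est:tcomp-} outright. No structural property of $R\mc P_0^m$ beyond $\phi\geq 0$ enters; in particular the Hilbert-transform condition \eqref{co:dpos} plays no role, and the analytic work you anticipate in your final paragraph is not where the proof lives.
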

\begin{proof} Without loss generality we can 
 consider, that for arbitrary $\e$ the condition 
\begin{equation}\label{est:e}
\sum_{k\in \N} \left|\Im \frac{1}{\lambda_k} 
\right|<\e
\end{equation}
is fulfilled instead of second condition from \eqref{cond:SR}. For this purpose it is 
enough to shift finite number of points. 
It does not change the property of (in-)completeness 
(see Remark 2). First condition from \eqref{cond:SR} implies the
estimate $|\Im \lambda_k| \geq | \lambda_k|\sin \alpha $.
This estimate give the following estimate for
the Poisson kernel at every point $\lambda_k$: 
$$
\hspace{-1mm}\frac1{\pi}\,
\frac{|\Im \lambda_k|}{(t-\Re \lambda_k)^2
+(\Im \lambda_k)^2}\leq 
\frac{1}{\pi\sin^2 \alpha}\,
\frac{|\Im \lambda_k|}{| \lambda_k|^2}=
\frac{1}{\pi\sin^2 \alpha}\,
\left|\Im \frac{1}{\lambda_k}\right|.
$$
Hence in view  of \eqref{est:e} for arbitrary $\phi \in R\mc P_0^m$ we have
\begin{multline*}
\sum_{k\in \N} (\Poi_{{\bC_\pm}} \phi)(\lambda_k):=\sum_{k\in \N}\frac1{\pi}\int_{-\infty}^{+\infty}
\frac{|\Im \lambda_k|}{(t-\Re \lambda_k)^2
+(\Im \lambda_k)^2}\,\phi (t)\d t
\\
\leq
\sum_{k\in \N}\frac{1}{\pi\sin^2 \alpha}\,
\left|\Im \frac{1}{\lambda_k}\right|
\cdot \int_{-\infty}^{+\infty}\phi (t)\d t
\leq \frac{\e}{\pi\sin^2 \alpha}\,
\cdot \int_{-\infty}^{+\infty}\phi(x)\d x.
\end{multline*}
If we choose $\e$ sufficiently small, then 
$$
\frac{\e}{\pi\sin^2 \alpha}\leq \frac{d'}{2\pi}
<\frac{d}{2\pi}\, ,
$$
and the condition \eqref{est:tcomp-} of Theorem \ref{th:ces} is fulfilled with $d'$ instead of $d$.
By   Theorem \ref{th:ces} the exponential system $\Exp^{i\Lambda}$ is incomplete
in  spaces  $C(I_{d'})$ and
  $L^p(I_{d'})$ without one
 or two functions, where $d'<d$. Thereby,
the system $\Exp^{i\Lambda}$ is incomplete
in  spaces  $C(I_{d})$ and
 in $L^p(I_{d})$ for any $d>0$.
\end{proof}
\end{example}

Let's remind the Beurling--Malliavin Theorem in the Redheffer's interpretation (see \cite[Theorem 77]{Red77}, \cite{Koosis92}--\cite{HJ}).
\begin{theoBM}[on the radius of completeness] Let $\Lambda=\{\lambda_k\}_{k\in \N}\subset \bC$. If there exists a number $c>0$ and a sequence $\{n_k\}_{k\in \N}$ of distinct integers such that the series
\begin{equation}\label{df:red}
\sum_{k\in \N} \left|\frac{1}{\lambda_k}-\frac{c}{2\pi n_k} \right|	
\end{equation}
converges, then the system $\Exp^{i\Lambda}$
is incomplete in $C(I_d)$ and $L^p(I_d)$ for any $d>c$.
Inversely, if the series \eqref{df:red} diverges, then
the system $\Exp^{i\Lambda}$
is complete in $C(I_d)$ and $L^p(I_d)$ for any $d<c$.
\end{theoBM}
 
On the basis of Theorem \ref{th:ces}  on completeness of exponential systems it is possible to give a new proof of the Beurling--Malliavin Theorem on the radius of completeness. But it is impossible to name this new proof neither short, nor simple.

\section{New results}
Here we give only two results.

\begin{theorem} Let\/ $\Lambda=\{\lambda_k\}_{k\in \N}$ and\/ 
$ \Gamma =\{ \gamma_k\}_{k\in \N} \subset \mathbb C$  are two point sequences on\/ $\bC$ without limit points in\/ $\bC$.   Suppose that there is\/ $h\in \R$ such that  $\Im \lambda_k\neq -h$ and $\Im \gamma_k \neq -h$ for all $k\in \N$, and
{\small \begin{equation*}
\limsup_{t\to \pm\infty}
 \sum_{k\in \N} \left(\frac{|\Im \lambda_k+h|}{(t-\Re \lambda_k)^2
+(\Im \lambda_k+h)^2}
-\frac{|\Im \gamma_k+h|}{(t-\Re \gamma_k)^2
+(\Im \gamma_k+h)^2}
\right)	<+\infty .
\end{equation*}}
If the sequence\/ $\Gamma$ is  a zero sequence for\/ $B_\sigma^\infty$ , then the sequence\/ $\Lambda$ is also a zero sequence for\/ $B_\sigma^\infty$.

If the system\/ $\mathrm{Exp}^{i\Lambda}$ is complete in one of  spaces\/  $C(I_{d})$ or\/ $L^p(I_{d})$, $p\geq~1$, then 
for any\/  $\gamma', \gamma''\notin \Gamma$, $\gamma'\neq \gamma''$ the system\/ $\mathrm{Exp}^{i\Gamma\cup \{i\gamma'\}}$ is complete in\/ $C(I_{d})$ and in\/  $L^p(I_{d})$ for $p\geq 2$, and the system\/ $\mathrm{Exp}^{i\Gamma\cup
\{i\gamma',\, i\gamma''\}}$ is complete in\/  $L^p(I_{d})$ for\/ $1\leq p<2$.
\end{theorem}

The notation of excesses for exponential system $\Exp^{i\Lambda}$ see in \cite{Red77}. We denote its as $\exc i\Lambda$. 
\begin{corollary} Let $\Lambda \cap \R=\varnothing$
and $\Gamma \cap \R=\varnothing$. If
$$
\limsup_{t\to\pm\infty} \sum_{k\in \N}\left|\Im \frac{\lambda_k-\gamma_k}{(t-\lambda_k)(t-\gamma_k)}\right|<+\infty,
$$
then $\Lambda$ and $\Gamma$ can be  zero sequences for $B_\sigma^\infty$ only simultaneously. 

Besides, $|\exc i\Lambda-\exc i\Gamma|\leq 1$ for $C(I_{d})$ and $L^p(I_{d})$ with $p\geq 2$, and $|\exc i\Lambda-\exc i\Gamma|\leq 2$ for $L^p(I_{d})$ with $1\leq p < 2$.
\end{corollary}
%%\begin{comment}

\medskip

E-mail: {\tt khabib-bulat@mail.ru}

 Web-site: {\tt http://math.bsunet.ru/khb{\_}e}

Bashkir State University, Ufa, Bashkortostan, RUSSIA

%%\end{comment}
\end{document}